\newtheorem{theorem}{Theorem}
\newtheorem{proposition}[theorem]{Proposition}
\newtheorem{definition}[theorem]{Definition}
\newtheorem{corollary}[theorem]{Corollary}
\newtheorem{remark}[theorem]{Remark}
\newcommand{\CP}{\mathbb{CP}}
\newcommand{\CC}{\mathbb{C}}
\newcommand{\HH}{\mathbb{H}}
\newcommand{\RR}{\mathbb{R}}
\newcommand{\ZZ}{\mathbb{Z}}
\newcommand{\LB}{{\rm{LB}}}
\numberwithin{equation}{section}
\numberwithin{theorem}{section}
\numberwithin{table}{section}
\begin{document}
\bibliographystyle{amsalpha} 
\title[Index theorem]{An index theorem on anti-self-dual orbifolds}
\author{Jeff A. Viaclovsky}
\address{Department of Mathematics, University of Wisconsin, Madison, 
WI, 53706}
\email{jeffv@math.wisc.edu}
\thanks{Research partially supported by NSF Grants DMS-0804042 and DMS-1105187}
\begin{abstract}
An index theorem for the 
anti-self-dual deformation complex on anti-self-dual orbifolds
with singularities conjugate to ADE-type is proved. 
In 1988, Claude Lebrun gave examples of scalar-flat 
K\"ahler ALE metrics with negative mass, on the total 
space of the bundle $\mathcal{O}(-n)$ over $S^2$. 
A corollary of this index theorem is that the moduli space of 
anti-self-dual ALE metrics near each of these metrics
has dimension at least  $4n-12$, and thus for $n \geq 4$ 
the LeBrun metrics admit a plethora of non-trivial anti-self-dual deformations.  
\end{abstract}
\date{February 2, 2012. Revised May 2012.}
\maketitle

\section{Introduction}

If $(M^{4},g)$ is an oriented four-dimensional Riemannian manifold, the
Hodge star operator associated to $g$ acting on $2$-forms is a mapping
 $*:\Lambda^{2} \mapsto\Lambda^{2}$ satisfying $*^2 = Id$, 
and $\Lambda^{2}$ admits a decomposition of the form 
\begin{align}\label{2formsplusminus}
\Lambda^{2}=\Lambda^{2}_{+} \oplus\Lambda^{2}_{-},
\end{align}
where $\Lambda^{2}_{\pm}$ are the $\pm 1$ eigenspaces of 
$\displaystyle{*|}_{\Lambda^{2}}$. 
The Weyl tensor can be viewed as an operator 
$\mathcal{W}_g: \Lambda^2 \rightarrow \Lambda^2$, and we define
$\mathcal{W}^{\pm}_g = \pi_{\pm} \mathcal{W}_g \pi_{\pm}$, 
where $\pi_{\pm}$ is the projection onto 
$\Lambda^2_{\pm}$.

\begin{definition}{\em
Let $(M^4,g)$ be an oriented four-manifold. 
The metric $g$ is called {\em{anti-self-dual}} if $\mathcal{W}^+_g = 0$. 
}
\end{definition}
 Since Poon's example of a $1$-parameter family of anti-self-dual 
metrics on $\overline{\mathbb{CP}}^2 \# \overline{\mathbb{CP}}^2$ \cite{Poon}, 
a large number of examples of anti-self-dual 
metrics on various four-manifolds have been found.
We do not attempt to 
give a complete history here, and only mention following works
\cite{DonaldsonFriedman, Floer, Honda28, Joyce1995, KovalevSinger, 
LeBrunJDG, LeBrunSinger}.
In this paper, we will be concerned with orbifold 
metrics in dimension four with isolated orbifold points:
\begin{definition}{\em
 A {\em{Riemannian 
orbifold}} $(M^4,g)$ is a topological space which is a 
smooth manifold of dimension $4$ with a smooth Riemannian metric 
away from finitely many singular points.  
At a singular point $p$, $M$ is locally diffeomorphic 
to a cone $\mathcal{C}$ on 
$S^{3} / \Gamma$, where $\Gamma \subset {\rm{SO}}(4)$ 
is a finite subgroup acting freely on $S^{3}$.
Furthermore, at such a singular point, the metric is locally the 
quotient of a smooth $\Gamma$-invariant metric on $B^4$ under 
the orbifold group $\Gamma$.}
\end{definition}
 Given a compact Riemannian orbifold $(\hat{M}, \hat{g})$ with 
non-negative scalar curvature, 
one can use the Green's function for the conformal 
Laplacian $G_p$ to 
associate with any point $p$ a non-compact scalar-flat orbifold by 
\begin{align}
(M \setminus \{p\}, g_p = G_p^{2}\hat{g} ).  
\end{align}
A coordinate system at infinity arises from using inverted 
normal coordinates in the metric $g$ in a neighborhood
of the point $p$, which gives rise to the following 
definition: 
\begin{definition}
\label{ALEdef}
{\em
 A complete Riemannian manifold $(X^4,g)$ 
is called {\em{asymptotically locally 
Euclidean}} or {\em{ALE}} of order $\tau$ if 
there exists a finite subgroup 
$\Gamma \subset {\rm{SO}}(4)$ 
acting freely on $S^3$ and a 
diffeomorphism 
$\psi : X \setminus K \rightarrow ( \mathbf{R}^4 \setminus B(0,R)) / \Gamma$ 
where $K$ is a compact subset of $X$, and such that under this identification, 
\begin{align}
\label{eqgdfale1in}
(\psi_* g)_{ij} &= \delta_{ij} + O( \rho^{-\tau}),\\
\label{eqgdfale2in}
\ \partial^{|k|} (\psi_*g)_{ij} &= O(\rho^{-\tau - k }),
\end{align}
for any partial derivative of order $k$, as
$r \rightarrow \infty$, where $\rho$ is the distance to some fixed basepoint.  
}
\end{definition}

By an {\em{orbifold compactification}} of an ALE space $(X,g)$,
we mean choosing a conformal factor $u : X \rightarrow \RR_+$ 
such that $u = O(\rho^{-2})$ as $\rho \rightarrow \infty$. 
The space $(X, u^2 g)$ then compactifies to a
$C^{1,\alpha}$ orbifold. In the anti-self-dual case, there 
moreover exists a $C^{\infty}$-orbifold conformal compactification 
$(\hat{X}, \hat{g})$ with positive Yamabe invariant \cite[Proposition 12]{CLW}. 
So from the conformal perspective, anti-self-dual ALE spaces are more or less 
the same as anti-self-dual Riemannian orbifolds. 

 There are many interesting examples of 
anti-self-dual ALE spaces.
Eguchi-Hanson discovered 
a Ricci-flat anti-self-dual metric on $\mathcal{O}(-2)$ which is ALE with 
group $\ZZ / 2 \ZZ$ at infinity \cite{EguchiHanson}. 
Gibbons-Hawking then wrote down a metric ansatz depending on the choice of 
$n$ monopole points in $\RR^3$, giving an anti-self-dual ALE hyperk\"ahler metric 
with group $\ZZ / n \ZZ$ at infinity, which 
are called multi-Eguchi-Hanson metrics \cite{GibbonsHawking, Hitchin2}. 
In 1989, Kronheimer then classified all hyperk\"ahler ALE spaces
in dimension $4$, \cite{Kronheimer, Kronheimer2}, 
which we will describe in Section~\ref{S1}.
Using the Joyce construction from \cite{Joyce1995}, 
Calderbank and Singer wrote down many examples of toric 
ALE anti-self-dual metrics, which are moreover 
scalar-flat K\"ahler, and have cyclic groups $\ZZ/ n\ZZ$ at 
infinity contained in ${\rm{U}}(2)$ \cite{CalderbankSinger}.
\subsection{Group actions}
We will next consider the following subgroups of ${\rm{SU}}(2)$:

\begin{itemize}
\item
Type $A_n, n \geq 1$: $\Gamma$ the cyclic group $\mathbb{Z}_{n+1}$,
\begin{align}
\label{su2}
\left(
\begin{matrix}
\exp^{2 \pi i p / (n+1)}   &  0 \\
0    & \exp^{-2 \pi i p / (n+1) } \\
\end{matrix}
\right),  \ \ 0 \leq p \leq n. 
\end{align}
acting on $\RR^4$, which is identified with $\CC^2$
via the map  
\begin{align}
(x_1, y_1, x_2, y_2) \mapsto (x_1 + i y_1, x_2 + i y_2) = (z_1, z_2).
\end{align}
Writing a quaternion $q \in \HH$ as $\alpha + \hat{j} \beta$ for 
$\alpha, \beta \in \CC$, we can also describe the action as 
generated by $e^{2\pi i / n}$, acting on the left. 
\item
Type $D_{n}, n \geq 3$: $\Gamma$ the binary dihedral group $\mathbb{D}^*_{n-2}$
of order $4(n-2)$. This is generated by $e^{\pi i / (n-2)}$ and $\hat{j}$, 
both acting on the left. 
\item
Type $E_6: \Gamma= \mathbb{T}^*$, 
the binary tetrahedral group of order $24$, double cover of~$A(4)$. 
\item
Type $E_7: \Gamma= \mathbb{O}^*$, 
the binary octohedral group of order $48$, double cover of~$S(4)$.
\item
Type $E_8: \Gamma = \mathbb{I}^*$, 
the binary icosahedral group of order $120$, double cover of~$A(5)$. 
\end{itemize}
The ``type'' terminology arises from the relation with 
hyperk\"ahler ALE spaces, which will be discussed in Section \ref{S1}.
Next, we have the notion of conjugate group actions: 
\begin{definition}{\em
A group action $\Gamma_1 \subset {\rm{SO}}(4)$ is {\em{conjugate}}
to another group action  $\Gamma_2 \subset {\rm{SO}}(4)$ if 
there is an intertwining map between the corresponding representations. 
That is, writing the $\Gamma_i$-action as a map 
$F_i : \Gamma_1 \rightarrow {\rm{SO}}(4)$ for $i = 1, 2$, 
then there exists an element $O \in O(4)$ 
such that $F_1 \circ O = O \circ F_2$. 
If $O \in {\rm{SO}}(4)$, then $\Gamma_1$ and $\Gamma_2$ are said to be 
orientation-preserving conjugate, while if $O \notin {\rm{SO}}(4)$, 
then $\Gamma_1$ and $\Gamma_2$ are said to be 
orientation-reversing conjugate
}
\end{definition}

When $\Gamma$ is not a cyclic group, any two subgroups of ${\rm{SO}}(4)$ 
which are isomorphic to $\Gamma$ are in fact conjugate \cite{MCC}. 
However, in the case of the cyclic group, there can be many conjugacy 
classes, and in this paper the only cyclic groups actions we consider 
are those conjugate to the $A_n$-type.
Type $D_3$ is in fact orientation-preserving 
conjugate to type $A_3$.

We note the important fact that if $(X,g)$ is an anti-self-dual 
ALE space with group $\Gamma$ at 
infinity, then the conformal compactification $(\hat{X}, \hat{g})$ with the 
anti-self-dual orientation has group 
$\tilde{\Gamma}$ at the orbifold point where $\tilde{\Gamma}$ is 
orientation-reversing conjugate to $\Gamma$. 
\subsection{Orbifold index theorems}

 Anti-self-dual metrics have a rich obstruction theory. If 
$(M,g)$ is an anti-self-dual four-manifold, the deformation complex is given by 
\begin{align}
\label{thecomplex}
\Gamma(T^*M) \overset{\mathcal{K}_g}{\longrightarrow} 
\Gamma(S^2_0(T^*M))  \overset{\mathcal{D}}{\longrightarrow}
\Gamma(S^2_0(\Lambda^2_+)),
\end{align}
where $S^2_0$ denotes traceless symmetric tensors, 
$\mathcal{K}_g$ is the conformal Killing operator defined 
by 
\begin{align}
( \mathcal{K}_g(\omega))_{ij} = \nabla_i \omega_j + \nabla_j \omega_i - 
\frac{1}{2} (\delta \omega) g, 
\end{align}
with $\delta \omega = \nabla^i \omega_i$, 
and $\mathcal{D} = (\mathcal{W}^+)_g'$ is the linearized self-dual Weyl curvature 
operator.

For a compact smooth closed manifold, there is a formula for the index depending 
only upon topological quantities. 
Let us denote by 
\begin{align}
Ind(M, g) = \dim( H^0(M,g)) -  \dim( H^1(M,g)) + \dim( H^2(M,g)),
\end{align}
where $H^i(M,g)$ is the $i$th cohomology of the complex \eqref{thecomplex}, 
for $i = 0,1,2$. For a compact anti-self-dual metric, we have 
\begin{align}
Ind(M, g) = \frac{1}{2} ( 15 \chi(M) + 29 \tau(M)), 
\end{align}
where $\chi(M)$ is the Euler characteristic and 
$\tau(M)$ is the signature of $M$. This formula 
is proved in \cite{KotschickKing}, but 
was also known to some experts before that paper, see for example
\cite[equation (1.2)]{Floer}, and \cite[page 369]{EGH} where it is attributed to I.M. Singer in 1978.

Our first result is an index theorem for an anti-self-dual orbifold with a singularity
{\em{orientation-reversing}} conjugate to ADE-type:
\begin{theorem}
\label{mainit2}
Let $(\hat{M}, \hat{g})$ be a compact anti-self-dual orbifold with a single orbifold 
point~$p$ with orbifold group $\Gamma$. If $\Gamma$ 
is  orientation-reversing conjugate to type~$A_1$, then 
\begin{align}
Ind(\hat{M}, \hat{g}) = \frac{1}{2} ( 15 \chi(\hat{M}) + 29 \tau (\hat{M})) 
-4.
\end{align}
If $\Gamma$ is orientation-reversing conjugate to type 
$A_{n}$ with $n \geq 2$, then 
\begin{align}
Ind(\hat{M}, \hat{g}) = \frac{1}{2} ( 15 \chi(\hat{M}) + 29 \tau (\hat{M})) 
+ 4  n - 10.
\end{align}
If $\Gamma$ is orientation-reversing conjugate to type $D_3$, then 
\begin{align}
Ind(\hat{M}, \hat{g}) = \frac{1}{2} ( 15 \chi(\hat{M}) + 29 \tau (\hat{M})) +2.
\end{align}
If $\Gamma$ is orientation-reversing conjugate to type
$D_{n}$ with $n \geq 4$, or type $E_{n}$ with $n = 6, 7, 8$, then 
\begin{align}
Ind( \hat{M}, \hat{g}) =  \frac{1}{2} ( 15 \chi(\hat{M}) + 29 \tau (\hat{M})) 
+ 4n - 11.
\end{align}
\end{theorem}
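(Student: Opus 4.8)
The plan is to reduce $Ind(\hat M, \hat g)$ to the orbifold index theorem and then to evaluate the resulting purely group-theoretic correction case by case. First I would roll the elliptic complex \eqref{thecomplex} into a single elliptic operator. Since $\mathcal{D}\circ\mathcal{K}_g = 0$, the operator
\[
P = \mathcal{K}_g \oplus \mathcal{D}^* : \Gamma(T^*M) \oplus \Gamma(S^2_0(\Lambda^2_+)) \longrightarrow \Gamma(S^2_0(T^*M))
\]
has kernel $H^0 \oplus H^2$ and cokernel $H^1$, so that $\mathrm{ind}(P) = Ind(\hat M, \hat g)$. On a closed smooth four-manifold the Atiyah--Singer theorem identifies this index with a curvature integral of value $\frac{1}{2}(15\chi + 29\tau)$; the content of the theorem is therefore entirely the correction produced by the single orbifold point.

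To extract that correction I would apply the orbifold (Kawasaki) index theorem, or equivalently the Atiyah--Patodi--Singer theorem on $\hat M \setminus B_\epsilon(p)$ with boundary $S^3/\Gamma$, and let $\epsilon \to 0$. This writes $Ind(\hat M,\hat g)$ as the interior integral of the Atiyah--Singer integrand plus a term localized at $p$. Because $\Gamma$ acts freely on $S^3$, each nontrivial $\gamma \in \Gamma$ fixes only the origin of the uniformizing ball, so its contribution is the isolated fixed-point term $\mathrm{tr}(\gamma|_{E})/\det(1 - \gamma|_{\RR^4})$ of the Atiyah--Bott--Segal--Singer formula for the relevant virtual bundle $E$, summed over $\gamma \neq 1$ and divided by $|\Gamma|$. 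The interior integral is itself a curvature integral, which I would convert back into the topological invariants $\chi(\hat M)$ and $\tau(\hat M)$ via the orbifold Gauss--Bonnet theorem (producing a defect of the form $1 - 1/|\Gamma|$) and the Hirzebruch signature theorem (producing the signature-defect $\eta$-invariant of $S^3/\Gamma$). All three group-dependent pieces assemble into a single number $N(\Gamma)$ depending only on $\Gamma$, which is what must be shown to equal $-4$, $4n-10$, $2$, or $4n-11$ in the respective cases.

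A crucial bookkeeping point is the orientation. The ADE groups lie in one factor of $SO(4) = (SU(2)_+ \times SU(2)_-)/\{\pm 1\}$ and hence act trivially on one of $\Lambda^2_\pm$; the hypothesis that $\Gamma$ is orientation-reversing conjugate to ADE-type is exactly what places the \emph{nontrivial} action on the $\Lambda^2_+$ appearing in \eqref{thecomplex}, thereby fixing the sign of every equivariant trace and of the signature defect. I would carry out the evaluation of $N(\Gamma)$ by representing each $\gamma$ explicitly on $\CC^2$ as in \eqref{su2} and the list that follows it. For $\Gamma$ cyclic (type $A_n$) the fixed-point and signature-defect terms become finite sums of products of cotangents, i.e. Dedekind-type sums, which can be summed in closed form; for the binary dihedral and binary polyhedral groups (types $D$ and $E$) I would organize the sum over $\Gamma \setminus \{1\}$ by conjugacy classes using the stated generators.

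The main obstacle I anticipate is precisely this explicit evaluation of $N(\Gamma)$: assembling the fixed-point contribution, the Gauss--Bonnet defect, and the signature defect into trigonometric sums and evaluating them group by group while keeping the orientation conventions consistent. Two internal checks would guide the computation. Since $D_3$ is orientation-preserving conjugate to $A_3$, its value must coincide with the $A_n$ formula at $n=3$, and indeed $4n-10$ gives $2$. The case $A_1 = \ZZ/2\ZZ$ is genuinely exceptional: its nontrivial element is $-\mathrm{Id} \in SO(4)$, which acts trivially on all of $\Lambda^2$ and is insensitive to orientation reversal, so the naive value $4n-10$ at $n=1$, namely $-6$, is corrected to $-4$; this degenerate element must be handled separately in the fixed-point sum.
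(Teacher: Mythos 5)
Your proposal correctly identifies the overall framework (roll up the complex \eqref{thecomplex} into an elliptic operator, apply Kawasaki/APS, convert the interior integral to $\chi(\hat M)$ and $\tau(\hat M)$ via the orbifold Gauss--Bonnet and signature formulas, and absorb everything group-dependent into a single correction $N(\Gamma)$), and your two consistency checks (the $D_3\cong A_3$ coincidence and the exceptional role of $-\mathrm{Id}$ in the $A_1$ case) are exactly right. But there is a genuine gap: you never evaluate $N(\Gamma)$. The entire content of Theorem \ref{mainit2} is the four numbers $-4$, $4n-10$, $2$, and $4n-11$; your argument stops precisely where they would have to be produced, deferring the ``trigonometric sums'' and the conjugacy-class computations for the binary dihedral and polyhedral groups as ``the main obstacle I anticipate.'' This is not a minor omission that a referee could fill in mechanically: the ASD deformation complex is not one of the classical complexes, so before any Dedekind-type sum can even be written down one must express its equivariant symbol (or its index) in terms of standard operators and extract the character of each $\gamma$ on the relevant virtual bundle, with orientation conventions kept straight through the signature defect $\eta(S^3/\Gamma)$; for the $E_8$ case this means organizing a sum over a group of order $120$. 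A plan that ends at this point is a strategy outline, not a proof.

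It is worth noting that the paper deliberately avoids the computation you propose. It uses Kawasaki's theorem only qualitatively, to conclude that a formula $Ind(\hat M,\hat g)=\frac12(15\chi(\hat M)+29\tau(\hat M))+N_\Gamma$ holds with $N_\Gamma$ depending only on the oriented conjugacy class of the singularity. The numerical value of $N_\Gamma$ is then pinned down by evaluating both sides on a single example for each $\Gamma$: the conformal compactifications of Kronheimer's hyperk\"ahler ALE spaces. Their indices are computed analytically, not representation-theoretically: Proposition \ref{hkprop} identifies the decaying kernel $H^1_\epsilon(X,g)$ with the $3n$-dimensional space spanned by $\omega_I,\omega_J,\omega_K$ tensored with $L^2$ harmonic $2$-forms, vanishing of $H^2$ follows from the scalar-flat K\"ahler property, and Proposition \ref{alelem} converts this ALE data into $Ind(\hat X,\hat g)$ (Theorem \ref{hypkthm}); plugging the known $\chi$ and $\tau$ of these spaces into Kawasaki's formula then solves for $N_\Gamma$. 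If you want to complete your route instead, you must actually carry out the equivariant fixed-point evaluation for all five families, which is substantially harder than anything in your outline; alternatively, you could adopt the paper's trick of trading that computation for known examples.
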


The next result is an index theorem for an anti-self-dual
orbifold with a singularity {\em{orientation-preserving}} conjugate to ADE-type: 
\begin{theorem}
\label{mainit}
Let $(\hat{M}, \hat{g})$ be a compact anti-self-dual orbifold with a single orbifold 
point~$p$ with orbifold group $\Gamma$ orientation-preserving conjugate to 
type $A_n$ with $n \geq 1$, or $D_n$ with $n \geq 3$, or $E_n$ with $n = 6, 7, 8$.
Then 
\begin{align}
Ind(\hat{M}, \hat{g})  = \frac{1}{2} ( 15 \chi(\hat{M}) + 29 \tau(\hat{M})) - 4 n. 
\end{align}

\end{theorem}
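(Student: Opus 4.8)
The plan is to compute the orbifold index by resolving the singular point, reducing to the smooth closed formula plus a single local term, and then evaluating that term through the hyperk\"ahler geometry of Kronheimer's ALE spaces. The orientation-preserving hypothesis is exactly what makes this feasible: near $p$ the orbifold is modeled on $\CC^2/\Gamma$ with $\Gamma \subset {\rm{SU}}(2)$ carrying the complex orientation, so the singularity is a Du Val (ADE) point admitting a minimal resolution $\pi : \tilde{X} \to \CC^2/\Gamma$. This $\tilde X$ carries Kronheimer's hyperk\"ahler ALE metric, which is anti-self-dual in the complex orientation and whose end is again modeled on $\CC^2/\Gamma$. First I would form the smooth closed four-manifold $\hat{M}'$ by excising a cone neighborhood of $p$ and gluing in $\tilde X$; since both pieces are modeled on $\CC^2/\Gamma$ with matching orientation, $\hat M'$ is a genuine oriented manifold resolving $\hat M$.

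The second step is the topological input. The exceptional fiber $\pi^{-1}(0)$ is a configuration of $n$ rational $(-2)$-curves whose intersection form is the negative of the ADE Cartan matrix, hence negative definite of rank $n$. Therefore gluing in $\tilde X$ changes the invariants by $\chi(\hat M') = \chi(\hat M) + n$ and $\tau(\hat M') = \tau(\hat M) - n$. After rolling up the complex \eqref{thecomplex} into the elliptic operator $\mathcal{D} \oplus \mathcal{K}_g^*$, the index on the closed manifold $\hat M'$ is the topological number
\[
Ind(\hat M') = \tfrac12\big(15\chi(\hat M') + 29\tau(\hat M')\big) = \tfrac12\big(15\chi(\hat M) + 29\tau(\hat M)\big) - 7n .
\]

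The heart of the argument is an excision/gluing formula relating $Ind(\hat M)$ to $Ind(\hat M')$ whose correction is precisely the $L^2$-index of the anti-self-dual deformation complex on $\tilde X$. I would set this up as a relative index theorem with APS boundary conditions on the neck $S^3/\Gamma$: since $\hat M$ and $\hat M'$ agree on $\hat M \setminus B_p$, the interior and eta-invariant boundary contributions from that common region cancel in the difference, leaving
\[
Ind(\hat M) = Ind(\hat M') - Ind_{L^2}(\tilde X).
\]
It then remains to compute $Ind_{L^2}(\tilde X)$. Here I would use that $\tilde X$ is hyperk\"ahler: its $L^2$ anti-self-dual deformations are unobstructed and are identified with the tangent space to Kronheimer's moduli space of hyperk\"ahler ALE metrics, of dimension $3 b_2(\tilde X) = 3n$; moreover a non-flat ALE space carries no decaying conformal Killing fields and no $L^2$ obstructions, so that $H^0 = H^2 = 0$ and $H^1 = 3n$. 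Thus $Ind_{L^2}(\tilde X) = -3n$, and combining gives $Ind(\hat M) = \tfrac12(15\chi + 29\tau) - 7n + 3n = \tfrac12(15\chi + 29\tau) - 4n$.

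I expect the main obstacle to be the rigorous justification of the gluing formula together with the $L^2$-index computation on the non-compact end. Two points need care: first, making the relative index theorem precise for this complex, which requires controlling the mapping properties of $\mathcal{D} \oplus \mathcal{K}_g^*$ in weighted spaces on the ALE end and verifying that the APS eta-invariant terms for the two fillings pair off correctly against the common boundary $S^3/\Gamma$; second, the vanishing statements $H^0(\tilde X) = H^2(\tilde X) = 0$ together with the identification of $H^1(\tilde X)$ with the $3n$-dimensional hyperk\"ahler moduli, which is where the Ricci-flat, holonomy ${\rm{SU}}(2)$ structure is essential and where one must exclude exceptional bounded or $L^2$ solutions that could shift the index. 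As a consistency check, the case $A_1$ recovers the Eguchi-Hanson space, where $b_2 = 1$ and the three-dimensional family of hyperk\"ahler metrics gives $Ind_{L^2} = -3$ and correction $-4$, matching the $A_1$ entry of Theorem~\ref{mainit2}, as it must since $\ZZ/2 = \{\pm I\}$ is orientation-neutral.
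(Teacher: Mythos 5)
Your topological bookkeeping is correct ($\chi(\hat M') = \chi(\hat M) + n$, $\tau(\hat M') = \tau(\hat M) - n$, hence $Ind(\hat M') = \tfrac12(15\chi(\hat M)+29\tau(\hat M)) - 7n$), and your observation that the orientation-preserving hypothesis is exactly what allows Kronheimer's space to be glued in with compatible anti-self-dual orientations is the right geometric insight. But the central step, the excision formula $Ind(\hat M) = Ind(\hat M') - Ind_{L^2}(\tilde X)$, is asserted rather than proved, and given your topological computation and the value $Ind_{L^2}(\tilde X) = -3n$, it is \emph{equivalent} to the statement of the theorem: both amount to the claim that the local correction term at the singular point equals $-4n$. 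So the proposal is circular at its heart. The cancellation heuristic (``the interior and eta-invariant contributions from the common region cancel'') does not dispose of the two terms that actually survive the subtraction: the APS-type contribution of the flat cone $B^4/\Gamma$ inside $\hat M$ (this is precisely the nontrivial local term that an orbifold index theorem must evaluate; there is no a priori reason for it to vanish), and the discrepancy between the APS index of the truncated ALE piece and your $Ind_{L^2}(\tilde X) = h^0 - h^1 + h^2$ defined via decaying solutions. That second relation is sensitive to the choice of weight and to homogeneous solutions on the cone $\RR^4/\Gamma$; the paper's own comparison result of exactly this type, Proposition \ref{alelem}, carries a correction term $\dim H^0(\RR^4/\Gamma)$ arising from pure-gauge solutions $\mathcal{K}_g\omega$ with $\omega$ asymptotic to conformal Killing fields of the cone, which shows that the ``clean'' formula you want cannot simply be assumed. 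Proving your excision formula for the mixed-order operator $\mathcal{D}\oplus\delta$ on spaces with ALE ends and cone points would be a piece of analysis at least as heavy as Proposition \ref{alelem}; it is not an off-the-shelf relative index theorem.

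Two further points. First, your vanishing claim is misattributed: ``non-flat'' does not imply no $L^2$ obstructions; the vanishing $H^2 = 0$ on the Kronheimer spaces is a theorem about scalar-flat K\"ahler/hyperk\"ahler ALE metrics (the paper cites Ache--Viaclovsky and LeBrun--Maskit for it), and likewise the identification $\dim H^1 = 3n$ requires the argument of Proposition \ref{hkprop} (decaying anti-self-dual deformations coincide with decaying Einstein deformations, which are spanned by $\omega_I,\omega_J,\omega_K$ tensored with $L^2$-harmonic $2$-forms). Second, for contrast: the paper avoids gluing analysis entirely. It invokes Kawasaki's theorem only to know that $Ind(\hat M) = \tfrac12(15\chi+29\tau) + N_\Gamma$ with $N_\Gamma$ depending solely on the oriented conjugacy class of the action; it evaluates $N_\Gamma$ for orientation-\emph{reversing} ADE points by plugging in the conformal compactifications of the Kronheimer spaces (Theorem \ref{hypkthm}, resting on Propositions \ref{hkprop} and \ref{alelem}), and then obtains the orientation-preserving constants by applying the same principle to the football $S^4/\Gamma$, which has one singular point of each type and whose index is just $\dim H^0(S^4/\Gamma, g_S)$. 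If you want to salvage your resolution picture, the economical fix is to keep the gluing only as topology and replace the unproven excision formula by this Kawasaki-plus-known-examples logic.
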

The proofs of Theorem~\ref{mainit2} and Theorem~\ref{mainit} 
use Kawasaki's orbifold index theorem \cite{Kawasaki}. 
However, we do not compute the correction terms directly, 
but instead use an analytic method to determine the 
correction terms using certain examples.\footnote{The correction term 
for any cyclic quotient singularity 
has recently been computed in \cite{LockViaclovsky}.}

\begin{remark}
\label{srmk}
{\em
For simplicity, the above theorems are stated in the case of a single 
orbifold point. However, if there are several orbifold points each of 
the above types, then a similar formula holds, with 
the correction term simply the sum of the corresponding 
correction terms for each type of orbifold point. }
\end{remark}

\subsection{LeBrun negative mass metrics}
\label{negativemass}
In \cite{LeBrunnegative}, LeBrun presented the first known examples of scalar-flat ALE 
spaces of negative mass, which gave counterexamples to extending the
positive mass theorem to ALE spaces. We briefly describe these as follows. Define
\begin{align}
g_{\LB} = \frac{ dr^2}{ 1 + Ar^{-2} + B r^{-4}} +r^2 \Big[ \sigma_1^2 + \sigma_2^2
+ (  1 + Ar^{-2} + B r^{-4}) \sigma_3^2 \Big],
\end{align}
where $r$ is a radial coordinate, and $\{ \sigma_1, \sigma_2, \sigma_3 \}$ is a 
left-invariant coframe on $S^3 = {\rm{SU}}(2)$, and $A = n -2$, $B = 1 - n$. 
Redefine the radial coordinate to be $\hat{r}^2 = r^2 - 1$, 
and attach a $\CP^1$ at $\hat{r} = 0$. After taking a quotient by $\ZZ_{n}$, 
with action given by the diagonal action
\begin{align}
\label{u2c}
(z_1, z_2) \mapsto \exp^{2 \pi i p / n}(z_1, z_2),  \ \ 0 \leq p \leq n -1,
\end{align}
the metric then extends smoothly over the added $\CP^1$, is ALE at infinity,
and is diffeomorphic to $\mathcal{O}(-n)$.  
The mass is computed to be $-4 \pi^2 (n -2)$, which is 
negative when $n > 2$. 
These metrics are scalar-flat K\"ahler, and satisfy 
$b^2_- = 1, \tau = -1, \chi = 2$.

Since the $\ZZ/n \ZZ$-action in \eqref{u2c} is orientation-reversing conjugate to 
type $A_{n-1}$ under the intertwining map $(z_1, z_2) 
\mapsto (z_1, \overline{z}_2)$, a corollary of Theorem \ref{mainit} 
is the following:
\begin{corollary}
\label{lbcor}
Let $( \widehat{\mathcal{O}(-n)}, \hat{g}_{\LB})$ be a conformally 
compactified LeBrun metric. Then 
\begin{align}
\label{lbind}
Ind ( \widehat{\mathcal{O}(-n)}, \hat{g}_{\LB}) = 12 - 4n. 
\end{align}
\end{corollary}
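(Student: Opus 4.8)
The plan is to read the answer off Theorem~\ref{mainit} after settling two preliminary points: the orientation-conjugacy class of the orbifold group, and the topological invariants of the compactification. For the first, the group at infinity of the LeBrun ALE space is the $\ZZ_n$-action \eqref{u2c}, which is orientation-reversing conjugate to type $A_{n-1}$ via the intertwining map $(z_1,z_2)\mapsto(z_1,\overline{z}_2)$. By the fact noted above, the orbifold group $\tilde\Gamma$ of the conformal compactification is orientation-reversing conjugate to this group at infinity; since the product of two orientation-reversing elements of $O(4)$ lies in ${\rm{SO}}(4)$, composing the two conjugacies shows $\tilde\Gamma$ is \emph{orientation-preserving} conjugate to $A_{n-1}$. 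This is exactly the hypothesis of Theorem~\ref{mainit} (rather than Theorem~\ref{mainit2}), with cyclic parameter $n-1$, so the correction term is $-4(n-1)$; here I take $n\geq 2$ so that $n-1\geq 1$.

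Next I would compute $\chi$ and $\tau$ of $\widehat{\mathcal{O}(-n)}$ from the stated invariants $\chi=2$, $\tau=-1$, $b^2_-=1$ of the ALE space, which retracts onto the zero section $\CP^1$ of self-intersection $-n$. The orbifold compactification cones off the link $S^3/\ZZ_n=L(n,1)$ to a single point, and inclusion--exclusion with this decomposition gives
\begin{align*}
\chi\big(\widehat{\mathcal{O}(-n)}\big)=\chi\big(\mathcal{O}(-n)\big)+1-\chi\big(L(n,1)\big)=2+1-0=3,
\end{align*}
since a closed $3$-manifold has vanishing Euler characteristic. The same decomposition shows $H_2$ stays one-dimensional over $\mathbb{Q}$ (the lens space contributes only torsion), generated by the $\CP^1$; as its self-intersection $-n$ is negative, the intersection form is negative definite and $\tau(\widehat{\mathcal{O}(-n)})=-1$. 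As a check, for $n=1$ the group is trivial and the compactification is the smooth manifold $\overline{\CP}^2$, for which $\chi=3$ and $\tau=-1$ indeed hold.

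Substituting into Theorem~\ref{mainit} then gives
\begin{align*}
Ind\big(\widehat{\mathcal{O}(-n)},\hat{g}_{\LB}\big)=\frac{1}{2}\big(15\cdot 3+29\cdot(-1)\big)-4(n-1)=8-4(n-1)=12-4n.
\end{align*}
I expect the main obstacle to be the topological step: one must correctly pass from the invariants of the noncompact ALE space to those of the orbifold compactification, and confirm that it is the ordinary $\chi,\tau$ of the underlying space---not an orbifold-weighted version---that enter Theorem~\ref{mainit}. The conjugacy composition is elementary but is the other place a sign error would propagate, since it is precisely the doubled orientation reversal that converts the $A_{n-1}$ at infinity into an orientation-preserving $A_{n-1}$ at the orbifold point, thereby selecting Theorem~\ref{mainit} over Theorem~\ref{mainit2}.
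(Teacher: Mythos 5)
Your proposal is correct and follows essentially the same route as the paper: identify the orbifold group of the compactification as orientation-preserving conjugate to $A_{n-1}$ (via the doubled orientation reversal), apply Theorem~\ref{mainit} with correction $-4(n-1)$, and substitute $\chi(\widehat{\mathcal{O}(-n)})=3$, $\tau(\widehat{\mathcal{O}(-n)})=-1$. The paper simply asserts these topological values and the conjugacy statement, whereas you verify them; that filled-in detail is accurate and consistent with the paper's argument.
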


We briefly recall some details of moduli space theory 
\cite{Itoh2, KotschickKing} (these references deal with the 
case of smooth manifolds, but the proofs are easily generalized to the
setting of orbifolds).  
Given an anti-self-dual metric $g$ on a compact orbifold, there 
is a map $\Psi: H^1 \rightarrow H^2$, called the {\em{Kuranishi map}}
which is equivariant with respect to the action of $H^0$, 
and the moduli space of anti-self-dual conformal structures 
near $g$ is locally isomorphic to 
$\Psi^{-1}(0) / H^0$. Therefore, if $H^2 = 0$, the moduli 
space is locally isomorphic to $H^1 / H^0$. 

Our final result is about the moduli space of anti-self-dual metrics nearby the
conformally compactified LeBrun 
negative mass metrics. The case $n = 1$
is the Burns metric, which is conformal to Fubini-Study metric 
on $\CP^2$, and is rigid. The case $n =2$ is the Eguchi-Hanson metric which 
is also rigid. But for $n \geq 4$ these metrics are not rigid 
as anti-self-dual metrics:
\begin{theorem} 
\label{defthm}
For $n \geq 4$, the dimension of the 
moduli space of anti-self-dual orbifold metrics near a LeBrun 
metric $( \widehat{\mathcal{O}(-n)}, \hat{g}_{\LB})$ is at least $4n - 12$. 
\end{theorem}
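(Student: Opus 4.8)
The plan is to deduce Theorem~\ref{defthm} from the index computation in Corollary~\ref{lbcor} together with the moduli-space theory recalled just above the statement. By Corollary~\ref{lbcor} we have $Ind(\widehat{\mathcal{O}(-n)}, \hat g_{\LB}) = \dim H^0 - \dim H^1 + \dim H^2 = 12 - 4n$. The moduli space near $\hat g_{\LB}$ is locally $\Psi^{-1}(0)/H^0$, and in the case $H^2 = 0$ it is locally $H^1/H^0$, of dimension $\dim H^1 - \dim H^0$. So the strategy is to pin down $\dim H^0$ and $\dim H^2$, at which point the index formula immediately determines $\dim H^1$, and hence the dimension of the moduli space.

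\medskip

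First I would compute $H^0$, the space of conformal Killing fields of $(\widehat{\mathcal{O}(-n)}, \hat g_{\LB})$. Since the LeBrun metrics are scalar-flat K\"ahler and toric (they carry the $\U(2)$-symmetry visible in the coframe $\{\sigma_1,\sigma_2,\sigma_3\}$, descending to an effective torus action after the $\ZZ_n$-quotient), the isometry group and its conformal enlargement can be read off explicitly. I expect $\dim H^0 = 4$ for these metrics (coming from the residual $\U(2)/\ZZ_n$ symmetry), independent of $n$; this should be verifiable directly from the description of the metric and is consistent with the known structure of $\mathcal{O}(-n)$. Second, and this is where the real work lies, I would argue that $H^2 = 0$, i.e.\ that there are no obstructions to integrating first-order deformations. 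The cleanest route is a vanishing theorem for the self-dual part: on a scalar-flat K\"ahler anti-self-dual manifold there are Weitzenb\"ock-type arguments (as in the Itoh and Kotschick--King references) showing that $H^2$, which consists of certain $S^2_0(\Lambda^2_+)$-valued solutions, must vanish when the scalar curvature is nonnegative and the relevant curvature conditions hold. One must check that these arguments go through in the ALE/orbifold setting, using the conformal compactification $(\hat X, \hat g)$ with positive Yamabe invariant provided by \cite[Proposition 12]{CLW}.

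\medskip

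Granting $H^2 = 0$ and $\dim H^0 = 4$, the index formula gives
\begin{align}
\dim H^1 = \dim H^0 + \dim H^2 - Ind = 4 + 0 - (12 - 4n) = 4n - 8,
\end{align}
and therefore the moduli space near $\hat g_{\LB}$ has dimension
\begin{align}
\dim H^1 - \dim H^0 = (4n - 8) - 4 = 4n - 12.
\end{align}
For $n \geq 4$ this is positive, so the LeBrun metrics are genuinely non-rigid. I would phrase the conclusion as a lower bound of exactly $4n - 12$ to hedge against the possibility that $H^2$ is only shown to inject trivially rather than vanish identically, or that $\dim H^0$ is only bounded above; the inequality direction is what the index gives most robustly.

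\medskip

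The main obstacle is the vanishing $H^2 = 0$. Unlike $H^0$, which is a symmetry computation, controlling $H^2$ requires either a global Weitzenb\"ock/integration-by-parts argument valid on the non-compact ALE space (where boundary terms at infinity must be shown to vanish using the order-$\tau$ decay in Definition~\ref{ALEdef}) or a transfer of the compact vanishing theorem through the orbifold conformal compactification. Verifying that the decay of $\hat g_{\LB}$ and its deformations is strong enough to kill the boundary contributions, and that the scalar-flat K\"ahler structure supplies the correct curvature sign in the Weitzenb\"ock formula for the operator $\mathcal{D} = (\mathcal{W}^+)_g'$, is the delicate step; everything else follows formally from the index theorem.
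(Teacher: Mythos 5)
Your proposal is correct and follows essentially the same route as the paper: Corollary~\ref{lbcor} for the index, the vanishing $H^2 = 0$ deduced from the scalar-flat K\"ahler structure (the paper settles this step by citing the LeBrun--Maskit vanishing theorem together with the decay results of Ache--Viaclovsky and the identification \eqref{h2en} from Proposition~\ref{alelem}, exactly the transfer-through-compactification you sketch), and $\dim H^0 = 4$ from the ${\rm{U}}(2)$ conformal automorphism group, giving $\dim H^1 = 4n-8$ and moduli dimension at least $4n-12$. One small correction: the reason the conclusion is only a lower bound is not uncertainty about $H^2$ or $H^0$, but that the moduli space is locally $H^1/H^0$ and the action of $H^0$ on $H^1$ is not explicitly computed, so the quotient could have dimension exceeding $\dim H^1 - \dim H^0$.
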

This is proved in Section \ref{exps} using the above index theorems.
It is easy to see that any sufficiently close deformed metric has positive 
orbifold Yamabe invariant, and thus there is an associated 
anti-self-dual ALE space \cite{AB2, ViaclovskyFourier}. Thus the above theorem could equivalently be stated in terms 
of the moduli space of anti-self-dual ALE metrics near the ALE metric $( \mathcal{O}(-n), g_{\LB})$.

To exactly determine the dimension of the moduli space near the 
LeBrun metrics, it would be necessary to explicitly compute the 
action of $H^0$ on $H^1$. This does not follow from the 
above index theorems, which is why we can only give a lower bound for the 
dimension of the moduli space.  

\subsection{Questions}
We end the introduction with some interesting 
questions:\footnote{Questions 2--4 have recently been answered by 
Nobuhiro Honda using arguments from twistor theory. We refer the reader to 
\cite{HondaOn} for the complete statement of his result.}
\begin{enumerate}

\vspace{3mm}
\item The hyperk\"ahler ALE spaces are anti-self-dual spaces with group 
actions of type $A_n$, $D_n$, $E_6$, $E_7$, and $E_8$
contained in ${\rm{SU}}(2)$. 
The LeBrun negative mass metrics are examples of anti-self-dual ALE spaces with 
group orientation-reversing conjugate to type $A_n$. 
Are there in fact non-trivial examples of anti-self-dual ALE spaces 
with group at infinity {\em{orientation-reversing}} conjugate 
to the $D_n$ type for $n \geq 3$, and to the types $E_6, E_7$ and $E_8$?

\vspace{3mm}
\item  The paper \cite{PedersenPoonKSF} 
discusses some K\"ahler scalar-flat deformations of the LeBrun 
negative mass metrics. But there is no indication given there 
of the dimension of such deformations; it is not clear what 
free parameters there are in this family. Of the $4n-12$ dimensional 
family found above, how many of these are K\"ahler scalar-flat deformations?

\vspace{3mm}
\item  What are the possible conformal automorphism groups of the metrics in the 
$4n-12$ dimensional family found above?

\vspace{3mm} 
\item
For the LeBrun negative mass metrics on $\mathcal{O}(-n)$, what 
is the local dimension of the moduli space of anti-self-dual ALE metrics 
for $n \geq 3$? Is it equal to $4n -12$? Are there nontrivial deformations 
for $n =3$?

\end{enumerate}


\subsection{Acknowledgements} 
The author would like to thank Nobuhiro Honda and Claude LeBrun for numerous enlightening
discussions regarding deformation theory of anti-self-dual metrics. 
Matthew Gursky provided crucial help with Proposition~\ref{alelem}.
The author would also like to thank 
Michael Lock and John Lott for many useful discussions about index theory.

\section{Hyperk\"ahler ALE spaces}
\label{S1}

The hyperk\"ahler ALE spaces were classified in dimension 
$4$ by Kronheimer \cite{Kronheimer, Kronheimer2}. 
These are anti-self-dual Ricci flat-ALE metrics of order $4$, 
with groups at infinity of ADE-type described in the 
introduction. 
We write the three independent complex structures as $I, J, K$.
Using the metric, these are identified with K\"ahler forms 
$\omega_I, \omega_J, \omega_K$, which are parallel self-dual $2$-forms. 
The cohomology of these spaces are generated by $2$-spheres with
self-intersection~$-2$, with
intersection matrix given by the negative of the 
corresponding Cartan matrix. 
We summarize the above in Table~\ref{hktp}.
\begin{table}[t]
\caption{Invariants of hyperk\"ahler ALE spaces.}
\label{hktp}
\begin{tabular}{l l l l l}
\hline
Type & $\Gamma$ & $|\Gamma|$& $ b_2^-$ & $\chi$ \\
\hline
$A_n, n \geq 1$ & $\mathbb{Z}_{n+1}$ & $n+1$ & $n$ & $n+1$  \\
$D_m, m \geq 3$ & $\mathbb{D}^*_{m-2}$ & $4(m-2)$ & $m$ & $m+1$ \\
$E_6$ & $ \mathbb{T}^*$ & $24$ &  $6$ & $7$ \\
$E_7$ &  $\mathbb{O}^*$  & $48$ & $7$ & $8$ \\
$E_8$ & $\mathbb{I}^*$  & $120$ & $8$ & $9$ \\
\hline
\end{tabular}
\end{table}

We next have a proposition regarding infinitesimal 
deformations of hyperk\"ahler ALE spaces. Notice 
that these spaces are anti-self-dual with the 
complex orientation.

\begin{proposition}
\label{hkprop}
Let $(X,g)$ be a hyperk\"ahler ALE space 
of type $A_n$ for $n \geq 1$, type $D_n$ for $n \geq 3$, 
or type $E_n$ for $n = 6, 7, 8$.  
For $-4 \leq \epsilon < 0$, let $H^1_{\epsilon}(X,g)$ denote the space 
of traceless symmetric $2$-tensors $h \in S^2_0((T^*X))$ satisfying
\begin{align}
(\mathcal{W}^+)_g'(h) = 0,\
\delta_g(h) = 0, 
\end{align}
with $h = O( \rho^{\epsilon})$ as $\rho \rightarrow \infty$,
where $\rho$ is the distance to some fixed basepoint, and 
$(\delta_g h)_i = \nabla^j h_{ij}$ is the divergence. 
Then $H^1_{\epsilon}(X,g) = H^1_{-4}(X,g)$, and
using the isomorphism $S^2_0(T^*M) = \Lambda^2_+ \otimes \Lambda^2_-$,
$H^1_{-4}(X,g)$ has a basis
\begin{align}
\{\omega_I \otimes \omega^-_j, \omega_J \otimes \omega^-_j, 
\omega_K \otimes \omega^-_j\},
\end{align}
where $\{\omega^-_j, j = 1, \dots, n = \dim(H^2(M))\}$ is a basis of
the space of $L^2$ harmonic $2$-forms. 
Consequently, 
\begin{align}
\dim(H^1_{\epsilon}(X,g)) = 3n. 
\end{align}
\end{proposition}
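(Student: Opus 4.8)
The plan is to exploit the parallel hyperk\"ahler structure to convert the pair of PDE's defining $H^1_\epsilon$ into a Hodge-theoretic problem for anti-self-dual $2$-forms. First I would use the bundle isomorphism $S^2_0(T^*X) \cong \Lambda^2_+ \otimes \Lambda^2_-$, realized by sending $\omega \otimes \alpha$ to the symmetric traceless endomorphism $\omega \circ \alpha$: the composition of a self-dual and an anti-self-dual $2$-form, viewed as skew endomorphisms via $g$, is symmetric and trace-free, because $\Lambda^2_+$ and $\Lambda^2_-$ commute as endomorphisms and are orthogonal. Since $\omega_I, \omega_J, \omega_K$ are parallel and pointwise orthogonal, they trivialize $\Lambda^2_+$, so every $h \in \Gamma(S^2_0(T^*X))$ has a unique expression $h = \omega_I \circ \alpha_I + \omega_J \circ \alpha_J + \omega_K \circ \alpha_K$ with $\alpha_I, \alpha_J, \alpha_K \in \Gamma(\Lambda^2_-)$, and $h = O(\rho^\epsilon)$ if and only if each $\alpha_a = O(\rho^\epsilon)$.

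The heart of the argument is to show that, on a hyperk\"ahler (hence Ricci-flat and anti-self-dual) background, the two equations $\delta_g h = 0$ and $(\mathcal{W}^+)'_g(h) = 0$ are together equivalent to the statement that each $\alpha_a$ is a harmonic anti-self-dual $2$-form. Because the $\omega_a$ are parallel, differentiation passes through them, $\nabla h = \sum_a \omega_a \circ \nabla \alpha_a$, so $\delta_g h$ is built linearly from the $\nabla \alpha_a$, and a short computation should identify $\delta_g h = 0$ with the co-closedness $d^* \alpha_a = 0$. For the Weyl term I would linearize $(\mathcal{W}^+)_g$ at the Ricci-flat metric $g$: the zeroth-order curvature contributions in the Weitzenb\"ock expression drop out because $s_g = 0$ and $W^+_g = 0$, and in the divergence-free gauge the linearized operator reduces to a first-order operator measuring the failure of the $\alpha_a$ to be closed. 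Combined with co-closedness, $(\mathcal{W}^+)'_g(h) = 0$ then forces $d\alpha_a = 0$ as well, so each $\alpha_a$ is closed and co-closed, i.e. harmonic.

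Granting this reduction, $H^1_{-4}(X,g)$ is identified with $\RR^3 \otimes \mathcal{H}$, where $\mathcal{H}$ is the space of anti-self-dual harmonic $2$-forms that are $O(\rho^{-4})$. On an ALE $4$-manifold these are precisely the $L^2$ harmonic $2$-forms (there are no self-dual ones since $b_2^+ = 0$ for these spaces), and standard ALE Hodge theory identifies them with $H^2(X)$, giving a basis $\{\omega^-_j\}$ with $j = 1, \dots, n$ and $n = b_2^- = \dim H^2(X)$; here I would cite the classification and period computations of Kronheimer \cite{Kronheimer} together with the decay estimate that $L^2$ harmonic forms on an ALE space are $O(\rho^{-4})$. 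Combining this with the trivialization by $\omega_I, \omega_J, \omega_K$ yields the asserted basis $\{\omega_a \otimes \omega^-_j\}$ and the count $\dim H^1_{-4}(X,g) = 3n$.

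Finally, to upgrade from $\epsilon = -4$ to the whole range $-4 \le \epsilon < 0$, I would invoke the general principle that the weighted kernel $H^1_\epsilon$ is independent of $\epsilon$ across any interval free of indicial roots of the operator $h \mapsto (\delta_g h, (\mathcal{W}^+)'_g h)$ on the asymptotic cone $\RR^4/\Gamma$. By the reduction above these indicial roots coincide with the critical rates for harmonic anti-self-dual $2$-forms on the cone; the next root above the $L^2$ rate occurs at the constant parallel forms, i.e. at $\epsilon = 0$, which is excluded, and there are none in the open interval $(-4,0)$. Hence no new solutions appear and $H^1_\epsilon(X,g) = H^1_{-4}(X,g)$ throughout, as in \cite{Itoh2}. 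The main obstacle is the second paragraph: pinning down exactly how the linearized self-dual Weyl operator acts after the parallel trivialization, so that the two geometric PDE's collapse cleanly to closedness and co-closedness of the $\alpha_a$; the decay and indicial-root bookkeeping of the last step is the secondary technical point.
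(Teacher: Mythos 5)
Your endgame --- trivializing $\Lambda^2_+$ by the parallel forms $\omega_I,\omega_J,\omega_K$, reducing to harmonic anti-self-dual $2$-forms, and identifying $L^2$-harmonic forms with $H^2(X)$ --- is the same as the paper's, which invokes Biquard's argument \cite{Biquard2011} and Carron's theorem \cite{CarronDuke}. But the step you yourself call the heart of the argument, namely that $\delta_g h = 0$ and $(\mathcal{W}^+)_g'(h)=0$ are \emph{pointwise} equivalent to $d^*\alpha_a=0$ and $d\alpha_a=0$, is false, and this is exactly where the real work lies. First, an internal inconsistency: for an anti-self-dual $2$-form $\alpha$ one has $d^*\alpha = \pm * d\alpha$, so closed and co-closed are the \emph{same} condition; if both of your equivalences held, $\delta_g h=0$ alone would force each $\alpha_a$ to be harmonic, the Weyl equation would be redundant, and every traceless divergence-free tensor would be an infinitesimal anti-self-dual deformation, which is absurd. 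In fact neither equivalence holds. Writing $h=\sum_a \omega_a\circ\alpha_a$ and using that the $\omega_a$ are parallel and pointwise orthogonal to the $\alpha_a$, one computes
\begin{align*}
(\delta_g h)_j \;=\; \tfrac{1}{2}\sum_a (\omega_a)^{ik}(d\alpha_a)_{ikj},
\end{align*}
which is four equations on the triple $(\alpha_I,\alpha_J,\alpha_K)$, not the twelve equations $d^*\alpha_a=0$; the divergence contracts $\nabla\alpha_a$ against $\omega_a$, not against the metric. Second, $(\mathcal{W}^+)_g'$ cannot collapse to a first-order operator on divergence-free tensors: ellipticity of the complex \eqref{thecomplex} together with ellipticity of $\delta_g\mathcal{K}_g$ forces the principal symbol of $\mathcal{D}$ to be injective on the kernel of the symbol of $\delta_g$, so the second-order terms cannot cancel. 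Concretely, on the flat model $\RR^4$ (hyperk\"ahler, all curvature terms vanish, so your proposed reduction would have to hold exactly): traceless divergence-free tensors with linear coefficients form a $32$-dimensional space, all of which satisfy $(\mathcal{W}^+)'(h)=0$ because the linearized Weyl tensor at the flat metric is purely second order; yet triples of closed anti-self-dual forms with linear coefficients form only a $24$-dimensional space. The equivalence asserted in the proposition is a genuinely \emph{global} statement requiring decay and integration by parts, not a pointwise algebraic one.

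The paper's proof supplies precisely this global machinery, and it cannot be bypassed: it uses the fourth-order identity $\mathcal{D}_g^*\mathcal{D}_g h = \Delta_L\Delta_L h = B_g'(h)$, rules out the $O(\rho^{-1})$ term via \cite{AcheViaclovsky} to get $h=O(\rho^{-2})$, integrates by parts to conclude $\Delta_L h = 0$ (so $h$ is an infinitesimal \emph{Einstein} deformation), upgrades the decay to $O(\rho^{-4})$ by \cite{ct}, and only then applies Biquard's identification of $\Delta_L$ on TT tensors with $d_-d_-^*$ --- still a second-order operator in the $\alpha_a$ --- plus one more integration by parts to obtain harmonicity. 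Your final paragraph on weight independence inherits the same gap: the indicial roots you propose to count are those of your incorrect first-order system, whereas the statement $H^1_{\epsilon}=H^1_{-4}$ is exactly what the two decay-improvement theorems cited in the paper provide. Repairing your argument means reinstating the fourth-order operator and the integration-by-parts steps, at which point it becomes the paper's proof.
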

\begin{proof}
We begin by observing that
\begin{align}
B_g'(h) = \Delta_L \Delta_L h = \mathcal{D}_g^* \mathcal{D}_g h, 
\end{align}
where $B_g$ is the Bach tensor. The first identity is 
proved in \cite[Section 3]{GV11} and the second identity is 
proved in \cite[Section 4]{Itoh}. 
Consequently, $h \in Ker (B_g')$ and $\delta_g h = 0$.  
In the traceless divergence-free gauge, 
$B_g'$ is asymptotic to $\Delta^2$ as $\rho \rightarrow \infty$,
so \cite[Proposition 2.2]{AcheViaclovsky} 
implies that there is no $O(\rho^{-1})$ term in the asymptotic expansion of $h$ 
and therefore 
$h = O(\rho^{-2})$ as $\rho \rightarrow \infty$.
Integrating by parts, 
we have that $\Delta_L h = 0$, and 
consequently, $h$ is an infinitesimal Einstein 
deformation. It follows from \cite[Section 5]{ct}, that 
$h = O( \rho^{-4})$ as $\rho \rightarrow \infty$. 
We also see that if $\Delta_L h = 0$ and $\delta_g h= 0$
then $\mathcal{D}(h) = 0$. This shows that decaying infinitesimal Einstein 
deformations are equivalent to decaying infinitesimal anti-self-dual 
deformations on these spaces. 

The identification of the kernel is then 
given by the argument in \cite[Proposition~1.1]{Biquard2011}. 
Briefly, the operator $\Delta_L$ acting on traceless
divergence free tensors can be identified with the 
operator $d_- d_-^*$ where 
\begin{align}
d_+ : \Omega^1 \otimes \Omega^2_+
\rightarrow  \Omega^2_- \otimes \Omega^2_+ \cong \Gamma(S^2_0(T^*X))
\end{align}
is the exterior derivative. Since $\Omega^2_+$ has a basis of 
parallel sections $\{\omega_I, \omega_J, \omega_K\}$, the proposition 
follows since the $L^2$-cohomology $H^2_{(2)}(X)$ is isomorphic to the 
usual cohomology $H^2(X)$ \cite{CarronDuke}. 

\end{proof}
We next write down the index on the conformal compactifications of 
the hyperk\"ahler ALE spaces: 
\begin{theorem}
\label{hypkthm}
Let $(\hat{X}, \hat{g})$ be the conformal compactification
of a hyperk\"ahler ALE space $(X,g)$ with group $\Gamma$ at infinity.
If $\Gamma$ is type $A_1$, then 
\begin{align}
\label{ehi}
Ind( \hat{X}, \hat{g}) = 4.
\end{align}
If $\Gamma$ of type $A_n$ for $n \geq 2$, then
\begin{align}
Ind( \hat{X}, \hat{g}) = -3n +5.
\end{align}
If $\Gamma$ is of type $D_3$, then 
\begin{align}
Ind( \hat{X}, \hat{g}) = -4.
\end{align}
If $\Gamma$ is of type
$D_n$ with $n \geq 4$, or $E_n$ with $n = 6, 7, 8$, then 
\begin{align}
Ind( \hat{X}, \hat{g}) = -3n +4.
\end{align}
\end{theorem}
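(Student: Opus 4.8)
The plan is to compute the index directly from its definition $Ind(\hat{X}, \hat{g}) = \dim H^0 - \dim H^1 + \dim H^2$, exploiting the conformal invariance of the deformation complex \eqref{thecomplex} to transfer the analysis to the ALE space $(X,g)$. Since $\hat{g} = u^2 g$ with $u = O(\rho^{-2})$, a traceless symmetric $2$-tensor satisfies $|h|_{\hat g} = u^{-2} |h|_g$, so a section that is bounded (hence smooth) at the orbifold point $p$ corresponds on the ALE end to one that decays like $O(\rho^{-4})$. Thus each orbifold cohomology group is identified with the corresponding space of solutions on $(X,g)$ subject to this borderline decay rate.

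For $H^1$ this identification says precisely that $H^1(\hat{X}, \hat{g})$ is the space of divergence-free infinitesimal anti-self-dual deformations decaying like $O(\rho^{-4})$, which is exactly the space $H^1_{-4}(X,g)$ of Proposition~\ref{hkprop}; hence $\dim H^1 = 3n$ uniformly across all types. One must still check that no pure-gauge term and no slower-growing mode contributes at the borderline weight, but this follows from the weight correspondence above together with the fact that the leading asymptotics of the basis in Proposition~\ref{hkprop} are genuinely $O(\rho^{-4})$.

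The remaining and more delicate step is to compute $\dim H^0 + \dim H^2$. I would argue that these two groups together account for exactly the $\Gamma$-invariant conformal Killing fields of the flat cone $\RR^4/\Gamma$ that are regular at the cone point. Via a weighted Fredholm (indicial-root) analysis of $\mathcal{K}_g$ and its adjoint near $p$, each such asymptotic conformal mode either extends to a global conformal Killing field of $(\hat{X}, \hat{g})$, contributing to $H^0$, or is obstructed, contributing dually to $H^2 = \mathrm{coker}\,\mathcal{D}$; the borderline indicial roots are precisely these modes, and they are distributed between kernel and cokernel with no loss. This is also the natural quantity to compute, since the individual dimensions $\dim H^0$ and $\dim H^2$ can jump within the hyperk\"ahler moduli while their sum, like the index, stays fixed. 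The result is $\dim H^0 + \dim H^2 = \dim \mathfrak{z}_{\mathfrak{so}(4)}(\Gamma) + 1$, where the conformal algebra $\mathfrak{so}(5,1)$ of $S^4$ decomposes under $\mathfrak{so}(4)$ as $\RR^4 \oplus \mathfrak{so}(4) \oplus \RR \oplus \RR^4$ (translations, rotations, dilation, special conformal) and only the rotations commuting with $\Gamma$ and the dilation survive, since $\Gamma$ fixes no nonzero vector in $\RR^4$. Computing the centralizer then gives $6+1 = 7$ for type $A_1$ (where $-1$ is central in ${\rm SU}(2)$), $4+1 = 5$ for the cyclic groups of type $A_n$ with $n \geq 2$, and $3+1 = 4$ for the non-abelian groups of type $D_n$ with $n \geq 4$ and type $E_n$; the group of type $D_3$ is the cyclic group $\ZZ_4$, so it falls into the $A_n$, $n \geq 2$, count and yields $5$.

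Assembling, $Ind(\hat{X}, \hat{g}) = (\dim H^0 + \dim H^2) - \dim H^1$ equals $7 - 3n$, $5 - 3n$, or $4 - 3n$ according to type, giving $4$ for $A_1$, then $-3n + 5$ for $A_n$ with $n \geq 2$ (hence $-4$ for $D_3 = A_3$), and $-3n + 4$ for $D_n$ with $n \geq 4$ and $E_n$, as claimed. The main obstacle is the middle step: establishing rigorously that $\dim H^0 + \dim H^2$ equals the dimension of the asymptotic conformal symmetries of the cone. This is subtle precisely because the decay rate $-4$ is a borderline indicial weight, so one must identify the indicial roots of the deformation complex at $p$, rule out logarithmic terms, and set up the duality pairing between the obstructed conformal modes and elements of $\mathrm{coker}\,\mathcal{D}$; handling this borderline analysis, rather than the representation-theoretic count, which is elementary, is where the real work lies.
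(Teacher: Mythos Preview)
Your final formula $Ind(\hat X,\hat g)=\dim(H^0(\RR^4/\Gamma))-3n$ is correct, and your case-by-case count of $\dim(H^0(\RR^4/\Gamma))$ matches the paper's. But both of the intermediate identifications you make are wrong, and the argument only gives the right answer because the two errors cancel.

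First, $\dim H^1(\hat X,\hat g)\ne 3n$ in general. The divergence-free gauge is not conformally invariant: $\delta_g h=0$ does not imply $\delta_{\hat g}\hat h=0$, so $H^1(\hat X,\hat g)$ is not literally $H^1_{-4}(X,g)$. What actually happens (this is the content of Proposition~\ref{alelem}) is that there is a surjection $F\colon H^1_{-4}(X,g)\to H^1(\hat X,\hat g)$, built by transplanting $h$ to $\hat X$ and then re-gauging with respect to $\hat g$; the kernel of $F$ consists of tensors $h=\mathcal{K}_g\omega$ where $\omega$ solves $\square_g\omega=0$ and is asymptotic to a Euclidean conformal Killing field on the cone but does \emph{not} extend to a global conformal Killing field of $(\hat X,\hat g)$. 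That kernel has dimension $\dim(H^0(\RR^4/\Gamma))-\dim H^0(\hat X,\hat g)$, so in fact
\[
\dim H^1(\hat X,\hat g)=3n-\dim(H^0(\RR^4/\Gamma))+\dim H^0(\hat X,\hat g).
\]

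Second, $\dim H^0+\dim H^2\ne\dim(H^0(\RR^4/\Gamma))$ in general. For scalar-flat K\"ahler ALE spaces a vanishing argument (LeBrun--Maskit, via \cite{AcheViaclovsky2,LeBrunMaskit}) gives $H^2(\hat X,\hat g)=0$ uniformly, while $\dim H^0(\hat X,\hat g)$ genuinely varies across the hyperk\"ahler moduli: a generic multi-Eguchi--Hanson metric of type $A_n$, $n\ge 2$, has only the Gibbons--Hawking circle, so $\dim H^0=1$, not $5$. Your proposed mechanism---that each asymptotic conformal mode either lands in $H^0$ or contributes dually to $H^2=\mathrm{coker}\,\mathcal D$---conflates the two stages of the complex \eqref{thecomplex}: obstructions to extending asymptotic conformal Killing fields live at the $\mathcal K_g$ stage, not the $\mathcal D$ stage, and they show up precisely as the pure-gauge elements in $\ker F\subset H^1_{-4}(X,g)$ described above, not as elements of $H^2$.

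In short, the ``missing'' conformal modes enter the index by \emph{shrinking} $H^1$, not by inflating $H^2$. Once you track this correctly you recover the paper's identity \eqref{if1}, after which your endgame computation is fine.
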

\noindent
This will be proved in the following section.
\section{Index comparison}
We next have a proposition relating the index on an ALE 
space $(X,g)$ and the index on the compactification $(\hat{X}, \hat{g})$. 
We let $\{z\}$ denote coordinates at infinity for $(X,g)$, let $\rho = |z|$, 
let $\{x\}$ denote coordinate at the orbifold point $p$ of $(\hat{X}, \hat{g})$,
and let $r = |x|$. These satisfy $z = x/ |x|^2$ and $\rho = r^{-1}$. 
We write the metric as $g = G_p^{2} \hat{g}$ where 
$G_p = O(r^{-2})$ as $x \rightarrow 0$. 
Similarly to $H^1_{\epsilon}(X,g)$ defined in Proposition \ref{hkprop}, we define 
 $H^2_{\epsilon}(X,g)$ to be the space of solutions of $\mathcal{D}^*_g Z = 0$ 
satisfying  $Z = O( \rho^{\epsilon})$ as $\rho \rightarrow \infty$. 
We also let  $\dim(H^0(\RR^4/ \Gamma))$ denote the dimension of the 
space of conformal Killing fields on $\RR^4/ \Gamma$ with respect to the
Euclidean metric.
\begin{proposition}
\label{alelem}
Let $(X,g)$ be an anti-self-dual ALE metric with group $\Gamma$ 
at infinity, and let $(\hat{X}, \hat{g})$ 
be the orbifold conformal compactification. 
Then for $-2 < \delta < 0$, we have
\begin{align}
\label{if1}
- \dim(H^1_{\delta}(X, g)) + \dim(H^2_{-2 - \delta}(X, g))
+ \dim(H^0(\RR^4/ \Gamma)) = Ind(\hat{X}, \hat{g}).
\end{align}
\end{proposition}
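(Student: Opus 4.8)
The plan is to combine the conformal invariance of the deformation complex \eqref{thecomplex} with a weighted Fredholm analysis on the ALE end, so that $Ind(\hat{X},\hat{g})$ is realized as a weighted quantity on $(X,g)$ and the residual discrepancy is identified with the contribution of the flat model cone $\RR^4/\Gamma$. The starting observation is that $\mathcal{K}_g$ and $\mathcal{D}=(\mathcal{W}^+)'_g$ are conformally covariant, so the whole complex is conformally invariant up to explicit powers of the conformal factor. Since $g=G_p^2\hat{g}$ with $G_p=O(r^{-2})$ and $\rho=r^{-1}$, I would first build a precise dictionary translating decay $O(\rho^\delta)$ at infinity on $(X,g)$ into a prescribed order of vanishing at the orbifold point $p$ on $(\hat{X},\hat{g})$. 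Tracking the conformal weight bundle-by-bundle on $S^2_0(T^*X)$ and $S^2_0(\Lambda^2_+)$, using for instance $|h|_g=G_p^{-2}|h|_{\hat{g}}$ on $(0,2)$-tensors, pins down which decay windows on $X$ correspond to genuinely smooth orbifold sections across $p$.

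With this dictionary in hand, I would set up the operators of the complex as Fredholm maps between weighted spaces, which is legitimate precisely when the weight avoids the indicial roots of the model cone; the window $-2<\delta<0$ is chosen so that its endpoints $0$ and $-2$ are the relevant critical weights. The obstruction space is the most straightforward: $H^2(\hat{X},\hat{g})=\ker\mathcal{D}^*_{\hat{g}}$ consists of smooth sections, and conformal invariance of $\mathcal{D}^*$ together with an elliptic-regularity argument at $p$ identifies it with $H^2_{-2-\delta}(X,g)$. The dual weight $-2-\delta$ is exactly the weighted $L^2$-adjunction in dimension four: $\ker\mathcal{D}^*$ at weight $-2-\delta$ is the cokernel of $\mathcal{D}$ acting on weight-$\delta$ sections. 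I would likewise relate the kernel of $\mathcal{D}$ together with the gauge image $\mathrm{im}\,\mathcal{K}_g$ in weight $\delta$ to the infinitesimal deformations $H^1_\delta(X,g)$, choosing the divergence-free representative in each conformal class.

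The heart of the argument, and the step I expect to be the main obstacle, is the relative index computation that accounts for the remaining $H^0$ contribution. Conformal Killing fields are conformally invariant, so the ones smooth on $\hat{X}$ transfer to conformal Killing fields on $X$ that may grow at infinity, while the decaying ones are typically trivial; these do not match termwise with any single piece of the right-hand side. Instead I would invoke the relative index theorem (Lockhart--McOwen) to write $Ind(\hat{X},\hat{g})$ as the weighted quantity $-\dim H^1_\delta(X,g)+\dim H^2_{-2-\delta}(X,g)$ plus the sum of jump terms over the indicial roots lying in the window, and then verify that the only homogeneous solutions of the complex on $\RR^4/\Gamma$ crossed in $(-2,0)$ are the conformal Killing fields of the flat cone. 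This reduces the entire discrepancy to $\dim H^0(\RR^4/\Gamma)$, as claimed. The technical burden is precisely this indicial-root bookkeeping for $\mathcal{K}$ and $\mathcal{D}$ on $\RR^4/\Gamma$: I must rule out spurious contributions at the $H^1$- or $H^2$-level in the window, and I must handle the fact that the divergence-free gauge $\delta_g$ is not conformally invariant, so that the gauge-fixing has to be carried out consistently on each side of the conformal change.
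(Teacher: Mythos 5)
Your general architecture is partly sound and partly matches the paper: the conformal dictionary, the identification $\dim H^2_{-2-\delta}(X,g)=\dim H^2(\hat X,\hat g)$ via conformal invariance of $\mathcal{D}^*$ plus a removable-singularity argument, and the appeal to weighted Fredholm theory are all exactly what the paper does. The genuine gap is in your pivotal step, and as stated it is not just unproven but wrong: you claim the discrepancy $\dim H^0(\RR^4/\Gamma)$ arises from ``homogeneous solutions of the complex on $\RR^4/\Gamma$ crossed in $(-2,0)$,'' identified with the conformal Killing fields of the cone. But conformal Killing fields of $\RR^4/\Gamma$, as $1$-forms or vector fields, are homogeneous of degrees $0$, $1$ and $2$ (constants/translations; rotations and the dilation $\rho\,d\rho$; special conformal fields), so none of them is crossed in $(-2,0)$. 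In fact there are no relevant indicial roots inside $(-2,0)$ at all: by \cite[Theorem 1.11]{AcheViaclovsky2}, any $h\in H^1_{\delta}(X,g)$ with $\delta<0$ is automatically $O(\rho^{-2})$, which is precisely why $H^1_{\delta}$ is constant on the window. So your jump-term accounting produces zero discrepancy, i.e.\ it would yield $Ind(\hat X,\hat g)=-\dim H^1_{\delta}+\dim H^2_{-2-\delta}$, which is false --- for the Eguchi--Hanson metric this would give $-3$ rather than the correct value $4$ of Theorem \ref{hypkthm}.

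The $\dim H^0(\RR^4/\Gamma)$ term actually lives at the $1$-form (gauge) level across the weight range $(0,2]$, not inside $(-2,0)$. The extra elements of $H^1_{\delta}(X,g)$ beyond $H^1(\hat X,\hat g)$ are pure-gauge tensors $h=\mathcal{K}_g\omega$, where $\omega$ solves $\square_g\omega=\delta_g\mathcal{K}_g\omega=0$, grows like $O(\rho^2)$, and has leading asymptotic term a conformal Killing field of the cone that does \emph{not} extend to a conformal Killing field of $(X,g)$. This is where \cite{LockhartMcowen} is genuinely needed: to produce, for each homogeneous cone solution, a unique solution of $\square_g\omega=0$ asymptotic to it. Two filtering steps are then essential and are absent from your proposal: (i) homogeneous solutions of $\square\omega=0$ that are not conformal Killing must be discarded --- note every linear $1$-form on $\RR^4$ satisfies $\square\omega=0$ since $\mathcal{K}\omega$ is then constant, but such $\omega$ give bounded, non-decaying $\mathcal{K}_g\omega$ and hence nothing in $H^1_{\delta}$; (ii) cone conformal Killing fields whose associated solution extends to a global conformal Killing field on $(X,g)$ give $h=0$ and must be subtracted, which is how both $\dim H^0(\RR^4/\Gamma)$ and $\dim H^0(\hat X,\hat g)$ enter the final count. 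The paper organizes all of this by constructing an explicit surjection $F:H^1_{\delta}(X,g)\to H^1(\hat X,\hat g)$ (gauge-fixing $\hat h$ on the compact orbifold by solving $\square_{\hat g}\hat\omega=\delta_{\hat g}\hat h$, which uses Fredholm orthogonality to $H^0(\hat X,\hat g)$) and computing $\dim\ker F=\dim H^0(\RR^4/\Gamma)-\dim H^0(\hat X,\hat g)$; adding the $\dim H^0(\hat X,\hat g)$ contained in $Ind(\hat X,\hat g)$ then gives exactly \eqref{if1}. To repair your proof you would need to carry out this $1$-form-level analysis at the rates $0,1,2$, together with the extension/non-extension dichotomy; the window $(-2,0)$ by itself cannot see any of it.
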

\begin{proof}
Letting $\mathcal{D}_g$ denote the linearized self-dual Weyl curvature
(viewed as a (1,3)-tensor), we have the conformal transformation formulas
\begin{align}
\label{dchange}
\mathcal{D}_{g} ( h ) = \mathcal{D}_{\hat{g}} (\hat{h}), 
\end{align}
where $\hat{h} = G_p^{-2} h$, 
\begin{align}
\label{dconf}
\mathcal{D}_{g}^* ( Z) = G_p^{-2} \mathcal{D}_{\hat{g}}^* ( \hat{Z}),
\end{align}
where $\hat{Z} = Z$, and 
\begin{align}
\label{kconf}
\mathcal{K}_{g} (\omega) &= G_p^{2} \mathcal{K}_{\hat{g}} ( \hat{\omega}),
\end{align}
where $\hat{\omega} = G_p^{-2} \omega$.

 We note that elementary Fredholm theory shows that 
if $(\hat{X}, \hat{g})$ is a compact anti-self-dual orbifold then the 
cohomology groups of the complex \eqref{thecomplex} are isomorphic 
to the following:
\begin{align}
H^1(X,g) \cong \{ h \in S^2_0(T^*\hat{X}) \ | \ \mathcal{D}_{\hat{g}}(h) = 0,
\ \delta_{\hat{g}}(h) = 0 \}, 
\end{align}
and
\begin{align}
H^2(X,g) \cong \{ Z \in S^2_0( \Lambda^2_-) \ | \ \mathcal{D}_{\hat{g}}^* Z = 0 \}.
\end{align}

We first claim that 
\begin{align}
\label{h2en}
\dim(H^2_{-2 - \delta}(X, g)) = \dim( H^2(\hat{X}, \hat{g}).
\end{align}
To see this, from \cite[Theorem 1.11]{AcheViaclovsky2}, 
if $Z \in H^2_{\epsilon}(X,g)$ for 
$\epsilon < 0$, then $Z \in H^2_{-4}(X,g)$. Using the formula 
\begin{align}
|Z|_g = G_p^{-2} |\hat{Z}|_{\hat{g}}, 
\end{align}
the tensor $\hat{Z}$ is 
then a bounded solution of $\mathcal{D}^*_{\hat{g}} \hat{Z} = 0$ on $\hat{X} \setminus 
\{p\}$. Since $\mathcal{D}_{\hat{g}}\mathcal{D}^*_{\hat{g}}$ is 
an elliptic fourth order operator with leading term $\Delta^2$, $\hat{Z}$ 
extends to a smooth solution on all of $\hat{X}$. 
Conversely, any bounded solution of  $\mathcal{D}^*_{\hat{g}} \hat{Z} = 0$
yields an element $Z \in H^2_{-4}(X,g)$ satisfying 
$\mathcal{D}^*_{g} Z= 0$, and \eqref{h2en} is proved. 

Using the identity \eqref{h2en},  \eqref{if1} is then equivalent to 
\begin{align}
\label{if2}
-\dim(H^1_{\delta}(X, g)) + \dim(H^0(\RR^4/\Gamma)) 
= \dim( H^0 ( \hat{X}, \hat{g})) -  \dim( H^1 ( \hat{X}, \hat{g})),
\end{align}
which we rewrite as 
\begin{align}
\label{if3}
\dim(H^1_{\delta}(X, g))
=  \dim( H^1 ( \hat{X}, \hat{g})) + \{ \dim(H^0(\RR^4/\Gamma)) 
- \dim( H^0 ( \hat{X}, \hat{g}))\}. 
\end{align}
We next claim that there is a surjection
\begin{align}
\label{mapscp}
F: H^1_{\delta}(X, g) \rightarrow  H^1 ( \hat{X}, \hat{g}).
\end{align}
To define this mapping, if $h \in H^1_{\delta}(X, g)$ then $h = O(\rho^{-2})$ 
as $\rho \rightarrow \infty$
by \cite[Theorem 1.11]{AcheViaclovsky2}. From \eqref{dchange},
and the formula 
\begin{align}
\label{hfrm}
|\hat{h}|_{\hat{g}} = |h|_g,
\end{align}
it follows that 
$\hat{h}$ is a solution of $\mathcal{D}_{\hat{g}} ( \hat{h}) = 0$ on $\hat{X} \setminus 
\{p\}$ satisfying $\hat{h} = O(r^2)$ as $r \rightarrow 0$. 
This implies that $\hat{h} \in C^{1,\alpha}(S^2_0(T^*\hat{X}))$, 
so $\delta_{\hat{g}}(\hat{h}) \in  C^{0,\alpha}(T^*\hat{X})$.
Next, consider the operator $\square_{\hat{g}} = \delta_{\hat{g}} \mathcal{K}_{\hat{g}}$
mapping from 
\begin{align}
\square_{\hat{g}} : C^{2,\alpha}(T^* \hat{X}) 
\rightarrow  C^{0,\alpha}(T^* \hat{X}).
\end{align}
This operator is elliptic and self adjoint, with kernel exactly the space of 
conformal Killing fields. 
Since $\delta_{\hat{g}}(\hat{h})$ is orthogonal to this kernel, by Fredholm theory
there exists 
a solution $\hat{\omega} \in C^{2, \alpha}(T^* \hat{X})$  
of the equation $\square_{\hat{g}}\hat{\omega} =\delta_{\hat{g}} \hat{h}$. 
We therefore have the decomposition 
\begin{align}
\label{kdc}
\hat{h} = \mathcal{K}_{\hat{g}} \hat{\omega} + \hat{h}_0,
\end{align}
with $\hat{\omega} \in C^{2, \alpha}(T^* \hat{X})$ and
$\hat{h}_0 \in C^{1,\alpha}(S^2_0(T^* \hat{X}))$ 
satisfying $\delta \hat{h}_0 = 0$. Since 
$\mathcal{D}_{\hat{g}} \mathcal{K}_{\hat{g}} \omega = 0$, we have
$\mathcal{D}_{\hat{g}} \hat{h}_0  = 0$. As mentioned 
in the proof of Theorem \ref{hkprop}, 
$\mathcal{D}^*_{\hat{g}}  \mathcal{D}_{\hat{g}} = B'_{\hat{g}}$, 
has leading term $\Delta^2$ in the traceless divergence-free 
gauge, so the singularity is removable and therefore
$ \hat{h}_0 \in H^1( \hat{X}, \hat{g})$. 
The mapping $F: h \mapsto \hat{h}_0$ is the required mapping in~\eqref{mapscp}. 

We claim that the map $F$ is surjective. To see this, let $\hat{h}_0$ 
satisfy $\delta_{\hat{g}} \hat{h}_0 = 0$ and 
$\mathcal{D}_{\hat{g}} ( \hat{h}_0)  = 0$. Then $h_0 = G_p^2 \hat{h}_0$ 
satisfies $\mathcal{D}_g(h_0) =0$. From $\eqref{hfrm}$ we have 
that $h_0 = O(1)$ as $\rho \rightarrow \infty$, so 
$\delta_g h_0 = O({\rho}^{-1})$ as $\rho \rightarrow \infty$.
Consider 
\begin{align}
\label{sqx}
\square_g : C^{k, \alpha}_{1 + \epsilon} (T^* X) \rightarrow  
C^{k-2, \alpha}_{-1 + \epsilon} (T^* X),
\end{align}
for $\epsilon > 0$ small, and 
where the spaces are weighted H\"older spaces (see \cite{Bartnik}). 
The adjoint mapping has domain weight $-4 - (-1 + \epsilon) = -3 - \epsilon$. 
An integration by parts shows that the 
kernel of the adjoint therefore consists of decaying conformal 
Killing fields, which are necessarily trivial. Consequently, 
the mapping in \eqref{sqx} is surjective. 
So there exists a solution $\omega \in  C^{k, \alpha}_{1 + \epsilon} (T^* X)$
to the equation $\square_g ( \omega) = \delta_g h_0$.  
Defining $\tilde{h}_0 = h_0 - \mathcal{K}_g \omega$, we have 
$\mathcal{D}_g ( \tilde{h}_0) = 0$ and $\delta_g ( \tilde{h}_0) =0$, 
and therefore $\tilde{h}_0 \in H^1_{-2 - \delta}(X,g)$. 
Finally, we have that
\begin{align} 
\hat{\tilde{h}}_0 = G_p^{-2} \tilde{h}_0 = G_p^{-2} (  h_0 - \mathcal{K}_g \omega)
= \hat{h}_0 - \mathcal{K}_{\hat{g}} \hat{\omega}, 
\end{align}
and therefore $F(\tilde{h}_0) = \hat{h}_0$.

We next identify the kernel of the map $F$. If $ \hat{h}_0 = 0$, 
then $h =  \mathcal{K}_{\hat{g}} \hat{\omega}$ for some 
$\hat{\omega} \in C^{2, \alpha}(T^* \hat{X})$. 
The transformation formula \eqref{kconf} implies that 
$h = \mathcal{K}_g ( \omega)$, where $\omega = G_p^2 \hat{\omega}$. 
Since $\hat{\omega}$ satisfies $\hat{\omega} = O(1)$ as $r \rightarrow 0$, 
from the formula 
\begin{align}
\label{ogrow}
|\omega|_g = G_p | \hat{\omega}|_{\hat{g}}, 
\end{align}
it follows that $\omega = O( \rho^2)$ as $\rho \rightarrow \infty$. 
Since $\delta_g h = \square_g \omega = 0$, and $\square_g$ is an 
elliptic operator, $\omega$ admits an asymptotic expansion at 
infinity with leading term a solution of $\square \omega = 0$ in $\RR^4 / \Gamma$. 
To count these solutions, we use the relative index theorem 
of \cite{LockhartMcowen}, which says that for non-exceptional 
weights $\delta_1 < \delta_2$, 
\begin{align}
Ind( \square_g, \delta_2 )  - Ind( \square_g, \delta_1) = N( \delta_1, \delta_2), 
\end{align}
where $N(\delta_1, \delta_2)$ counts the dimension of the space of 
homogeneous solutions in $\RR^4/ \Gamma$ 
with growth rate between $\delta_1$ and $\delta_2$. 
It is easy to see that this implies the following. First, 
\begin{align}
\dim (Ker ( \square_g, \epsilon)) =
\begin{cases}
 4 & \mbox{ if } \Gamma = \{e\} \\
0 & \mbox{ if } \Gamma \mbox{ is nontrivial},\\
\end{cases}
\end{align}
and $\dim (Ker ( \square_g, 1 + \epsilon)) - \dim (Ker ( \square_g, 1- \epsilon))$
is the dimension of the space of $1$-forms with linear coefficients on $\RR^4$ which 
descend to $\RR^4/ \Gamma$. Finally,  
$\dim (Ker ( \square_g, 2 + \epsilon)) - \dim (Ker ( \square_g, 2 - \epsilon))$
is the dimension of the space of $1$-forms $\omega_2$ on $\RR^4$ with quadratic 
coefficients which descend to $\RR^4/ \Gamma$ and which solve $\square \omega_2 = 0$. 

Consequently, we have the following statements:
Given any $1$-form on $\RR^4/ \Gamma$ with constant coefficients $\omega_0$, 
there is a unique solution of $\square_g \omega = 0$ on $(X,g)$ 
with $\omega = \omega_0 + O(\rho^{-1})$ 
as $\rho \rightarrow \infty$. 
Given any $1$-form on $\RR^4/ \Gamma$ with linear coefficients $\omega_1$, 
there is a unique solution of $\square_g \omega = 0$ on $(X,g)$ 
with $\omega = \omega_1 + O(1)$ 
as $\rho \rightarrow \infty$. Finally, given any $1$-form on $\RR^4/ \Gamma$ 
with quadratic coefficients $\omega_2$ satisfying $\square \omega_2 = 0$, 
there is a unique solution of $\square_g \omega = 0$ on $(X,g)$ 
with $\omega = \omega_2 + O(\rho)$ as $\rho \rightarrow \infty$.

However, since $h$ is decaying and $h = \mathcal{K}_g\omega$, the leading 
terms in the asymptotic expansion of $\omega$ must be a conformal 
Killing field in $\RR^4/\Gamma$. 
If the group $\Gamma = \{e\}$, then there is a $15$-dimensional space 
of such solutions.  We are only interested in counting such solutions which do 
not extend to global conformal Killing fields on $(X,g)$.  
Note that from \eqref{kconf}, conformal 
Killing fields on $(X,g)$ correspond exactly to the 
conformal Killing fields on $(\hat{X},\hat{g})$, so the kernel of the map $F$
is of dimension $15 - \dim(H^0(\hat{X}, \hat{g}))
= \dim(H^0(\RR^4)) -  \dim(H^0(\hat{X}, \hat{g}))$. Since $F$ is 
surjective, the theorem follows in this case. 
If the group $\Gamma$ is non-trivial, then the leading term in the asymptotic 
expansion of $\omega$ 
is of the form $c_1 \rho d \rho + \omega_0$, where $\omega_0$ is a 
rotational Killing field on $\RR^4/ \Gamma$, for some constant $c_1$.  
The dimension of the space of such leading terms is 
given by $\dim(H^0(\RR^4/\Gamma))$. 
Again, we are only interested in counting such solutions which do 
not extend to conformal Killing fields on $(X,g)$, so the kernel of $F$ is of 
dimension $\dim(H^0(\RR^4/\Gamma)) -  \dim(H^0(\hat{X}, \hat{g}))$, 
and the proof is complete. 
\end{proof}
\begin{remark}{\em
Formula \eqref{if3} implies that, apart from the case
of $(S^4, g_S)$, the space $H^1_{\delta}(X,g)$ is always strictly 
larger than  $H^1(\hat{X}, \hat{g})$ for $-2 < \delta < 0$. 
The additional kernel elements are of the form $h = \mathcal{K}_g(\omega)$ 
where $\omega$ is a solution of $\square_g \omega = 0$ which is not a 
conformal Killing field. Using the 1-parameter group of diffeomorphisms
generated by $\omega$ it is possible to identify this extra 
kernel with a subspace of ``gluing parameters'' in gluing theory, but we do 
not do this here. 
}
\end{remark}
\begin{proof}[Proof of Theorem \ref{hypkthm}]
Since these spaces are scalar-flat K\"ahler, 
from \cite[Theorem 1.11]{AcheViaclovsky2} and \cite[Theorem 4.2]{LeBrunMaskit}, 
we have that 
$\dim(H^2_{-2 - \delta}(X, g))= 0$. 
Consequently, Proposition \ref{alelem} takes the form 
\begin{align}
\label{ifh}
 Ind(\hat{X}, \hat{g}) = 
- \dim(H^1_{\delta}(X, g)) + \dim(H^0(\RR^4/\Gamma)).
\end{align}
For type $A_n$, if $n =1$, then $\dim(H^0(\RR^4/\Gamma)) = 7$
(this is the dimension of the isometry group plus one for 
the radial scaling), 
so Propositions \ref{hkprop} and \ref{alelem} yield that 
\begin{align}
 Ind(\hat{X}, \hat{g}) = - 3 + 7 = 4. 
\end{align}
If $n \geq 2$, then from \cite[Section 1.3]{MCC} we have
$\dim(H^0(\RR^4/\Gamma)) = 4 +1 = 5$, so  
Propositions \ref{hkprop} and \ref{alelem} yield that 
\begin{align}
 Ind(\hat{X}, \hat{g}) = - 3n + 5.
\end{align}
For type $D_3$, \cite[Section 1.3]{MCC} we have $\dim(H^0(\RR^4/\Gamma)) = 4 +1 = 5$,
so Propositions \ref{hkprop} and \ref{alelem} yield that 
\begin{align}
 Ind(\hat{X}, \hat{g}) = -4.
\end{align}
For type $D_n$, $m \geq 4$, and type $E_n$, $n = 6,7,8$, 
from  \cite[Section 1.3]{MCC} we have
$\dim(H^0(\RR^4/\Gamma)) = 3 +1 = 1$, so  
Propositions \ref{hkprop} and \ref{alelem} yield that 
\begin{align}
 Ind(\hat{X}, \hat{g}) = - 3n + 4.
\end{align}
\end{proof}


\section{Completion of proofs}
\label{exps}

In this section, we complete the proofs of the 
results stated in the Introduction.
\begin{proof}[Proof of Theorem \ref{mainit2}]
From Kawasaki's orbifold index theorem \cite{Kawasaki}, it 
follows that there is an index formula of the form 
\begin{align}
\label{kawa1}
Ind(\hat{M}, \hat{g}) = \frac{1}{2} ( 15 \chi_{orb}(\hat{M}) + 29 \tau_{orb} (\hat{M})) 
+ N_{\Gamma}',
\end{align}
where $N_{\Gamma}'$ is a correction term depending only upon the 
oriented conjugacy class of the group action. 
The quantity $\chi_{orb}$ is the orbifold Euler characteristic defined by 
\begin{align}
\chi_{orb} = \frac{1}{8 \pi^2} 
\int_{\hat{M}} \left( |W|^2  - \frac{1}{2} |Ric|^2 + \frac{1}{6} R^2 \right) dV_{\hat{g}},
\end{align}
where $W$ is the Weyl tensor, $Ric$ is the Ricci tensor, and 
$R$ is the scalar curvature. The quantity
$\tau_{orb}$ is the orbifold signature defined by 
\begin{align}
\frac{1}{12 \pi^2} \int_{\hat{M}}  \left( |W^+_g|^2 - |W^-_g|^2 \right) dV_{\hat{g}}.
\end{align}
We have the orbifold signature formula
\begin{align}
\label{signature}
\tau(\hat{M}) = \tau_{orb}(\hat{M}) - \eta( S^3 / \Gamma ),
\end{align}
where $ \Gamma \subset {\rm{SO}}(4)$ is the orbifold group at $p$ 
and $\eta(  S^3 / \Gamma)$ is the 
$\eta$-invariant. 
The $\eta$-invariant for the ADE-type singularities is 
written down in \cite{Nakajima}, but we do not require this.
The Gauss-Bonnet formula in this context is
\begin{align}
\label{GB}
\chi(\hat{M}) = \chi_{orb}(\hat{M}) + 1 -  \frac{1}{|\Gamma|}.
\end{align}
See \cite{Hitchin} for a nice discussion of the formulas \eqref{signature}
and \eqref{GB}.

The quantity  $15 \chi_{orb}(\hat{M}) + 29 \tau_{orb} (\hat{M})$ may then be written 
as follows 
\begin{align*}
15 \chi_{orb}(\hat{M}) + 29 \tau_{orb} (\hat{M})  
= 15 \chi(\hat{M}) + 29 \tau(\hat{M})  
- 15 \Big( 1 - \frac{1}{|\Gamma|} \Big)  + 29 \eta(S^3 / \Gamma).
\end{align*}
Consequently, Kawasaki's formula \eqref{kawa1} becomes 
\begin{align}
\label{kawa2}
Ind(\hat{M}, \hat{g}) = \frac{1}{2} ( 15 \chi(\hat{M}) + 29 \tau (\hat{M})) 
+ {N}_{\Gamma},
\end{align}
where ${N}_{\Gamma}$ is a correction term depending only upon the 
oriented conjugacy class of the group action.
The important point is that \eqref{kawa2} holds 
on {\em{any}} anti-self-dual orbifold with one singular point orientation-preserving 
conjugate to type $\Gamma$. Consequently, we can simply 
plug in the examples in Theorem \ref{hypkthm} to determine 
the correction term. 

If the orbifold point is of type $A_1$,
then from Table \ref{hktp}, $\chi(X) = 2$, so
$\chi(\hat{X}) = 3$, and $\tau(\hat{X}) = -1$ (with the 
anti-self-dual orientation), so from Theorem \ref{hypkthm} we have
\begin{align}
4 =  \frac{1}{2} ( 15 \chi(\hat{X}) + 29 \tau (\hat{X})) 
+  {N}_{\Gamma} = 8 + {N}_{\Gamma}, 
\end{align}
so  ${N}_{\Gamma} = -4$. 

If the orbifold point is type $A_n$ for $n \geq 2$, then  
from Table \ref{hktp}, $\chi(\hat{X}) = n +2$, and $\tau(\hat{X}) = -n$,
so from Theorem \ref{hypkthm}, we have 
\begin{align*}
-3n +5 &= \frac{1}{2} ( 15 \chi(\hat{X}) + 29 \tau (\hat{X})) 
+  {N}_{\Gamma} =  \frac{1}{2} ( 15 (n+2) - 29 n) +  {N}_{\Gamma}, 
\end{align*} 
which implies that ${N}_{\Gamma} = 4n - 10$. 

If the orbifold point is type $D_3$, 
then from Table \ref{hktp}, $\chi(\hat{X}) = 5$, and $\tau(\hat{X}) = -3$,
so from Theorem \ref{hypkthm}, we have 
\begin{align*}
-4 &= \frac{1}{2} ( 15 \chi(\hat{X}) + 29 \tau (\hat{X})) 
+  {N}_{\Gamma} 
=  \frac{1}{2} ( 15 \cdot 5 - 29 \cdot 3) +  {N}_{\Gamma}, 
\end{align*} 
which implies that $ {N}_{\Gamma} = 2$. 

Finally, if the orbifold point is of type $D_n$, $n \geq 4$, 
or type $E_n$, $n = 6,7,8$, 
again from Table \ref{hktp} we have $\chi(\hat{X}) = n +2$, and $\tau(\hat{X}) = -n$, 
so from Theorem \ref{hypkthm}, we have 
\begin{align*}
-3n +4 &= \frac{1}{2} ( 15 \chi(\hat{X}) + 29 \tau (\hat{X})) 
+  {N}_{\Gamma} 
=  \frac{1}{2} ( 15 (n+2) - 29 n) + {N}_{\Gamma}, 
\end{align*} 
which implies that ${N}_{\Gamma} = 4n - 11$. 
\end{proof}

\begin{proof}[Proof of Theorem \ref{mainit}]
If $\Gamma \subset {\rm{SO}}(4)$ is a finite subgroup acting freely 
on $S^3$, then we let
$\Gamma$ act on $S^4 \subset \RR^5$ acting as rotations 
around the $x_5$-axis. The quotient $S^4/ \Gamma$ is an
orbifold with two singular points, and the 
spherical metric $g_{S}$ descends to this orbifold. 
The north pole has orbifold group 
$\Gamma$, while the south pole has orbifold group $\tilde{\Gamma}$ 
orientation-reversing conjugate to $\Gamma$. As in 
the proof of Theorem~\ref{mainit}, and as mentioned in 
Remark~\ref{srmk}, Kawasaki's formula yields 
\begin{align}
\label{kawanone}
Ind(S^4 / \Gamma, g_S) = \frac{1}{2} ( 15 \chi(S^4/ \Gamma) + 29 \tau (S^4/ \Gamma)) 
+ N_{\Gamma} +  N_{\tilde{\Gamma}},
\end{align}
It is easy to see that $Ind(S^4 / \Gamma, g_S) = \dim(H^0(S^4 / \Gamma, g_S))$,
$\chi(S^4/ \Gamma) =2 $, and that $ \tau (S^4/ \Gamma) = 0$, so we have 
\begin{align}
\label{kawa3}
\dim(H^0(S^4 / \Gamma, g_S)) = 15 + N_{\Gamma} +  N_{\tilde{\Gamma}}.
\end{align}

We first consider $\Gamma$ of type $A_n$.  We need only consider 
$n \geq 2$ since $n = 1$ is already covered in Theorem \ref{mainit2}
(the $\ZZ / 2\ZZ$-action is orientation-reversing conjugate to itself). 
For $n > 1$, \eqref{kawa3} and Theorem \ref{mainit2} yield 
\begin{align}
5 = 15  + 4n - 10 + N_{\tilde{\Gamma}},
\end{align}
which yields $N_{\tilde{\Gamma}} = - 4n$. 

For $\Gamma$ of type $D_3$, \eqref{kawa3} and Theorem \ref{mainit2} yield 
\begin{align}
5 = 15  + 2 +  N_{\tilde{\Gamma}},
\end{align}
which yields  $N_{\tilde{\Gamma}} = -12$ which is $-4n$ for $n =3$. 

For $\Gamma$ of type $D_n$, $n \geq 4$, or type $E_n$, $n = 6,7,8$, we have 
\begin{align}
4 = 15 + 4n - 11 +  N_{\tilde{\Gamma}}, 
\end{align}
which again yields $N_{\tilde{\Gamma}} = - 4n$.
\end{proof}

\begin{proof}[Proof of Corollary \ref{lbcor}] As mentioned in the Introduction,
as an ALE space, the group 
at infinity of the metric $(\mathcal{O}(-n), g_{\LB})$ is orientation-reversing 
conjugate to type $A_{n-1}$. Consequently, the group at the orbifold 
point of $ \widehat{\mathcal{O}(-n)}$ is orientation-preserving 
conjugate to type $A_{n-1}$. Theorem \ref{mainit} yields the index
\begin{align}
Ind(\widehat{\mathcal{O}(-n)}  , \hat{g}_{\LB})  = \frac{1}{2} \Big( 15 \chi(\widehat{\mathcal{O}(-n)}  ) + 
29 \tau(\widehat{\mathcal{O}(-n)} \Big) - 4 (n-1). 
\end{align}
Using that 
$\chi( \widehat{\mathcal{O}(-n)}) = 3$ and $\tau(\widehat{\mathcal{O}(-n)}) = -1$, 
\eqref{lbind} follows. 
\end{proof}

\begin{proof}[Proof of Theorem \ref{defthm}]
Since $(\mathcal{O}(-n),g_{\LB})$ is scalar-flat K\"ahler, by the 
same argument as given above in the proof of Theorem \ref{hkprop},
and \eqref{h2en}, we have that 
\begin{align}
\label{h20}
\dim(H^2(\widehat{\mathcal{O}(-n)}, \hat{g}_{\LB})) = 0.  
\end{align}
The conformal automorphism group of these metrics is ${\rm{U}}(2)$, which 
has real dimension~$4$, so Corollary \ref{lbcor} implies that 
$\dim(H^1(\widehat{\mathcal{O}(-n)}, \hat{g}_{\LB})) = 4n - 8$.  
From \eqref{h20}, as mentioned above in the 
Introduction, it follows that the moduli space is locally isomorphic to $H^1 / H^0$. 
Finally, since $H^0$ is of dimension $4$, the moduli 
space therefore has dimension at least $4n - 12$. 
\end{proof}
\bibliography{Index_references}

\end{document}